\newcommand{\R}{\mathbb{R}}
\newcommand{\C}{\mathbb{C}}
\newtheorem{theorem}{Theorem}[section]
\newtheorem{lemma}[theorem]{Lemma}
\newtheorem{corollary}[theorem]{Corollary}
\newtheorem{proposition}[theorem]{Proposition}
\theoremstyle{definition}
\newtheorem{definition}[theorem]{Definition}
\newtheorem{example}[theorem]{Example}
\newtheorem{question}[theorem]{Question}
\theoremstyle{remark}
\begin{document}
\title[New Method of Smooth Extension]{New Method of Smooth Extension of Local Maps on Linear Topological Spaces. Applications and Examples.}
\author{Genrich Belitskii}\address{Bengurion University of the Negev, Israel}
\email{Genrich@math.bgu.ac.il}

\author{Victoria Rayskin} \thanks{The second author wants to thank ARO for the grant \# W911NF-19-1-0399, which supported the revision of the techniques discussed in this work, and the manuscript preparation.}
\address{
Tufts University, USA}
\email{Victoria.Rayskin@Tufts.edu}

 \keywords{bump functions, local maps, map extinctions}
         
\begin{abstract}
The question of extension of locally defined maps to the entire space arises in many problems of analysis (e.g., local linearization of functional equations). 
A known classical method of extension of smooth local maps on Banach spaces uses smooth bump functions. However, such functions are absent in the majority of infinite-dimensional spaces. 
We suggest a new approach to localization of Banach spaces with the help of locally identical maps, which we call blid maps. In addition to smooth spaces, blid maps also allow to extend local maps on non-smooth spaces (e.g., $C^q [0, 1]$, $q=0, 1, 2,...$). 

For the spaces possessing blid maps, we show how to reconstruct a map from its derivatives at a point (see the Borel Lemma). We also demonstrate how blid maps assist in finding global solutions of cohomological equations having linear transformation of the argument. We present application of blid maps to local differentiable linearization of maps on Banach spaces.

We discuss differentiable localization for metric spaces (e.g.,  $C^{\infty}(\R)$), prove an extension result for locally defined maps and present examples of such  extensions for the specific metric spaces.
In conclusion, we formulate open problems.
\end{abstract}
\maketitle
\section{Introduction}\label{sec-intro}

The subject of localization of maps goes back to the works of S. Sobolev (\cite{S}) on generalized functions and of K. O. Friedrichs and D. A. Flanders on molifiers. Nowadays, the most frequently used analogous notions are the bump functions, which are smooth, equal to $1$ in some neighborhood and vanish outside of some bigger neighborhood.

There are many examples, where bump functions are used for the study of local properties of dynamical systems in $\R^n$. For instance, see \cite{N} and \cite{St}. J. Palis in his work \cite{P} considers bump functions in Banach spaces. He proves the existence of Lipschitz-continuous extensions of local maps with the help of Lipschitz-continuous bump functions. However, Z. Nitecki (\cite{N}) points out that it is unknown whether the smoothness of these extensions may be higher than Lipschitz.

Even though continuous bump functions exist in all Banach spaces, the majority of infinite-dimensional spaces do not have smooth bump functions. This is an obstacle in the local analysis of dynamical systems in infinite-dimensional spaces. Following V.Z. Meshkov (\cite{M}) we adopt the following
\begin{definition}[$C^q$-smooth spaces]
A space is called $C^q$-smooth, if it possesses a $C^q$-bump function.
\end{definition}

Consider $X=l_p$. If $p=2n$, then $h(||x||^p)$ is a $C^{\infty}$-bump function at $0$, where $h$ is a bump function on $\R$. However, it is known that $l_1$ space does not have $C^1$ bump functions (e.g., \cite{M}). Consequently, $C[0, 1]$ does not have smooth bump functions (this follows from Banach-Mazur Embedding  Theorem). 

In order to allow smooth localization of Banach spaces, we define analogs of bump functions, which we call blid maps (Section~\ref{section-banach}). $C^q$-smooth blid maps exist not only on all $C^q$-smooth spaces, but also on some Banach spaces, which are not $C^q$-smooth. We present specific examples of blid maps for such spaces.

The general topological spaces, such as $C^{\infty}(\R)$ and $C^{\infty}([0,1])$, are frequently discussed in a content of partial differential equations. For this reason, we also discuss how to apply our ideas to the linear topological spaces. In Section~\ref{section-topological} we define blid-differentiable property for topological spaces, present examples of spaces with such property and prove a theorem which asserts existence of global differentiable extension of locally defined maps.

We also discuss applications (Section~\ref{section-applications}) of the localization of the spaces to the questions of solvability of smooth cohomological eqations and to the local differentiable linearization on Banach spaces. The proofs of these results are based on the Borel Lemma for Banach spaces, which can be found in the same section. 

We conclude our paper  (Section~\ref{section-open-q})  with open questions regarding the existence of smooth blid-maps for some non-smooth spaces, validity of Whitney Extension Theorem for non-smooth spaces and existence of Banach spaces without differentiable blid maps.

%%%%%%%%%%%%%%%%%%%%%%%%%%%%%
\section{Banach Spaces}\label{section-banach}
First, let $X$ be a real Banach space, and $Y$ be a real or complex Banach space. We will discuss smooth  local maps $f: X\to Y$ and a possibility of smooth extension of the maps. Since Banach spaces are equipped with norms, we can consider Fr\'echet derivatives. In the topics related to Banach spaces, we will assume that differentiation is defined in Fr\'echet sense. 

The map's extension is usually not unique and can be studied in the context of the equivalence class of $f$, i.e. a germ $[f]$.  Recall that a germ $[f]$ at $x\in X$  is the equivalence class of local maps, such that any pair of the class members coincides on some neighborhood of $x$. Each element of the class is called a representative of a germ. Occasionally, we denote germ $[f]$ as $f$. In the future, without loss of generality, we will assume that $x=0$.
We are interested in the question of existence of a global representative of the germ.

Consider a $C^q$ germ. Does there exist a $C^q$ global representative of the germ? Suppose there exists a representative with $q$ bounded derivatives. Does there exist a global representative which also has $q$ bounded derivatives? To answer these question, we introduce special maps, discussed below.

\begin{definition}[$C^q$-smooth blid maps]\label{def-blid} A $C^q$ blid map for a Banach space $X$ is a global {\bf B}ounded {\bf L}ocal {\bf Id}entity at zero $C^q$ map $H : X \to X$.
\end{definition}

The idea of extensions with the help of blids first appears in \cite{B}, later in \cite{BR}. The Definition~\ref{def-blid} was introduces in~\cite{BR1} and was motivated with the following example.

\begin{example}{
 The $C^{\infty}$ germ, defined in the neighborhood of $0\in C[0,1]$ 
$$
f(x)=\int_0^1 \frac{dt}{1-x(t)}
$$
has a global $C^{\infty}$ representative:
$$
\int_0^1 \frac{dt}{1-h(x(t))x(t)}.
$$
Here blid map $h(s)s$ is defined with the help of the bump function $h$, such that $h(s)=1$ on $|s|<1/3$ and $0$ on $|s|>1/2.$
}
\end{example}

In \cite{BR1}, we generalize the idea of smooth extension of a locally defined map via composition of the map with a smooth blid-map. This method allows us to prove the Borel Lemma \ref{thm-Borel} for Banach spaces. Many questions of local dynamics can be addressed with the help of this theorem.

\begin{theorem}\label{thm-extension-Banach} Let a space $X$ possesses a $C^q$-blid map $H$. Then for every Banach space $Y$ and any $C^q$-germ $f$ at zero from $X$ to $Y$ there exists a global $C^q$-representative. Moreover, if all derivatives of $H$ are bounded, and $f$ contains a local representative bounded together with all its derivatives, then it has a global one with the same property.
\end{theorem}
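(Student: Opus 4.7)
The plan is to use the blid map $H$ to replace a local representative of the germ by a composition, after first rescaling $H$ so that its range fits inside the domain of the representative. Fix a local representative $f : B(0,R) \to Y$ of the germ $[f]$. Since $H$ is bounded, say $\|H(x)\| \le M$ for all $x \in X$, and equals the identity on some ball $B(0,r)$, I would introduce the rescaled map $\tilde H(x) := \lambda H(x/\lambda)$ with $\lambda > 0$ chosen small enough that $\lambda M < R$. A direct check shows that $\tilde H$ is again a $C^q$ blid map: it equals the identity on $B(0,\lambda r)$ and is bounded by $\lambda M < R$ on all of $X$. This is exactly the same device already used in the example above, where the pointwise blid $h(s)s$ is applied to $x(t)$ in order to bring the integrand into the region where $f$ is defined.

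Now set $F := f \circ \tilde H$. Because $\tilde H(X) \subset B(0,\lambda M) \subset B(0,R)$, the composition $F$ is defined on the whole of $X$. The chain rule gives that $F$ is $C^q$, and since $\tilde H$ is the identity on $B(0,\lambda r)$, $F$ coincides with $f$ on that neighborhood of zero. Hence $F$ is a global $C^q$ representative of the germ $[f]$, which proves the first assertion.

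For the moreover statement, assume in addition that $D^k H$ is bounded on $X$ for $1 \le k \le q$ and that the local representative $f$ is chosen so that $D^k f$ is bounded on $B(0,R)$ for $0 \le k \le q$. Differentiating the definition of $\tilde H$ gives $D^k \tilde H(x) = \lambda^{1-k} D^k H(x/\lambda)$, so every derivative of $\tilde H$ is uniformly bounded on $X$ (with a constant depending only on $\lambda$ and the sup-norm of $D^k H$). Then Faà di Bruno's formula expresses $D^k F(x)$ as a finite sum of terms of the form $D^j f(\tilde H(x))\bigl(D^{i_1}\tilde H(x),\dots,D^{i_j}\tilde H(x)\bigr)$ with $i_1 + \cdots + i_j = k$; each factor is uniformly bounded in $x \in X$, so $F$ inherits bounded derivatives of all orders up to $q$ on the whole space.

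The one step that needs real care is the rescaling: one must verify that $\lambda H(x/\lambda)$ still equals the identity on a nontrivial ball about zero, that its range sits strictly inside $B(0,R)$, and that the derivative bounds transfer (with controlled constants $\lambda^{1-k}$). Once that is in hand, the remainder of the argument is a mechanical application of the chain rule and the definitions of a germ and of a blid map.
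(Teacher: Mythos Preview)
Your argument is correct and follows essentially the same route as the paper: the global representative is the composition $F = f \circ \tilde H$, where $\tilde H$ is the blid map rescaled via $\tilde H(x)=\lambda H(x/\lambda)$ so that its range lands inside the domain of a local representative (compare the paper's proof of the Extension Proposition for topological spaces and the rescaling device in the subsequent proposition). Your treatment of the ``moreover'' clause via Fa\`a di Bruno is exactly what is needed and matches the intended argument.
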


 Obviously, if a space is $C^q$-smooth, it possesses  $C^q$-blid map. However, there are examples of Banach spaces that have blid-maps, but do not have bump functions of the same smoothness. We will illustrate this idea with the following examples (for details and proofs see \cite{BR1}) of blid-maps in various Banach spaces:

\begin{example} Suppose $X$ has a $C^q$ bump function $h:X\to \R$. Then, $H(x) =h(x)x$ is a $C^{q}(X)$ blid map.
\end{example}

\begin{example} Let $X=C[0,1]$ and $h$ be a smooth bump function on $\R$. Then, $H(x)(t) =h(x(t))x(t)$ is a $C^{\infty}(X)$ blid map.
\end{example}
\begin{example} More generally, suppose $X=C(M)$ where $M$ is a compact space. Then, $H(x)(t) =h(x(t))x(t)$ is a $C^{\infty}(X)$-blid.
\end{example}
\begin{example} Let $X=C^q[0,1]$. Then, a $C^{\infty}(X)$-blid $H(x)(t)$ can be defined as
 $$H(x)(t)=\sum_{j=0}^{q-1}\frac{t^j}{j!}h(x^{(j)}(0))x^{(j)}(0) + \int_0^t\,dt_1 \int_0^{t_1}\,dt_2... \int_0^{t_{q-1}}h\left( x^{(q)}(s) \right)x^{(q)}(s) \,ds .$$
\end{example}
There are also some examples of subspaces, where blid maps can be constructed:

\begin{example} Suppose $X$ possess a $C^q$-blid map $H$, and a subspace $X_1$ of $X$ be $H$-invariant. Then, the restriction $H_1 = H | X_1$ is a $C^q$-blid map on $X_1$.
\end{example}
\begin{example} Assume $\pi : X \to X$ is a bounded projector and $X$ possess $C^q$-blid map $H$. Then, the restriction $\pi(H) \vert Im(\pi)$ is a $C^q$-blid map on $Im(\pi)$, while the restriction $(H-\pi (H))\vert Ker(\pi)$ is a $C^q$-blid map on $Ker(\pi)$. Consequently,
if $X_1 \subset X$ is a subspace, such that there exists another subspace of $X$, so
that these two form a complementary pair, then $X_1$ possesses a blid map.
\end{example}

%%%%%%%%%%%%%%%%%%%%%%%%%
%%%%%%%%%%%%%%%%%%%%%%%%%

\section{Linear Topological Spaces}\label{section-topological}
As we noticed in section~\ref{sec-intro} localization on topological linear spaces (e.g.,  $C^{\infty}(D)$, where $D$ is some smooth manifold) is important for the study of partial differential equations.

It is not possible to define Fr\'echet differentiability on a general linear topological space. For this reason,  we will use weaker definitions of differentiation. 
As we have seen in Section~\ref{section-banach}, an extension of maps with the help of blids requires composition. Thus, we will discuss differentiability that satisfies the Chain Rule (in particular,  we cannot use G\^ateaux derivative). We will work with the bounded-differentiability, a stronger, compact (Hadamard) differentiability, and (if it can be defined) the strongest here, Fr\'echet differentiability. Let us recall these definitions.

\begin{definition}[Bounded differentiability]
 The map $f :X \to Y$ is bounded-differentiable at $x\in X$, if for every bounded subset $S\subset X$ and every $h\in S$ and $t\in \R$ 
$$
f(x+th) - f(x) = tA h +r(th)
$$
with 
$$r(th)/t \to 0$$ uniformly in $h$ as $t\to 0$.
\end{definition}

\begin{definition}[Compact (Hadamard) differentiability]
The map $f:X \to Y$ is compact (Hadamard) differentiable at $x\in X$, if
     $$ f(x+t_n h_n) -f(x)=t_n A h +o(t_n)$$
as $t_n \to 0$, and $h_n \to h$.
\end{definition}

If both $X$ and $Y$ are Banach spaces with the norms $||.||_1$ and $||.||_2$ respectively, then Fr\'echet differentiation is well defined. 
\begin{definition}[Fr\'echet differentiability]
The map $f$ is Fr\'echet differentiable at $0$ if
       $$ \lim_{h\to 0}||r(h)||_2/||h||_1 =0.$$
\end{definition}

These definitions define the same derivative if it exists, and differ only by the definition of the remainder term.

\begin{definition}
A space $X$ satisfies blid-differentiable property if for every neighborhood $U\subset X$ of $0$ there is a differentiable map $H$ defined on $X$, locally coinciding with the identity map, such that $H(X)\subset U$.
\end{definition}
Let us recall that a neighborhoods base of zero is a system $B=\{V_\alpha\}$ of neighborhoods of 0, such that for any neighborhood $U\subset X$ of 0 there exists some $V_\beta \in
 B$, $V_\beta \subset U$.

     Therefore, if there is a neighborhoods base $B$ such that for every $V_\alpha$ from $B$ there exists local identity $H_\alpha$, $H_\alpha(X) \subset  V_\alpha$, then $X$ satisfies the blid-property.
\begin{proposition}[Extension of Local Maps]\label{thm-main}
If $X$ satisfies blid-differentiable property, then every differentiable germ $f:X\to Y$ has a global differentiable representative.
\end{proposition}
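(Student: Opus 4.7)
\medskip\noindent\emph{Proof proposal.} The plan is to mirror the Banach-space argument used for Theorem~\ref{thm-extension-Banach}: compose a local representative of the germ with a blid map to obtain a globally defined map that agrees with the germ near $0$.

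First, I would fix a local representative $\tilde f$ of the germ, defined on some open neighborhood $U_0$ of $0$ and differentiable at every point of $U_0$; such a representative is provided by the hypothesis that the germ is differentiable. Next, I would apply the blid-differentiable property with the choice $U:=U_0$: it supplies a differentiable map $H:X\to X$ with $H(X)\subset U_0$ that coincides with the identity on some neighborhood $V$ of $0$. The candidate extension is then
$$g:=\tilde f\circ H:X\longrightarrow Y,$$
which is defined on all of $X$ because $H(X)\subset U_0=\mathrm{dom}\,\tilde f$, and which satisfies $g|_V=\tilde f|_V$ because $H|_V=\mathrm{id}_V$; hence $g$ is a global representative of the germ.

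To confirm differentiability of $g$ at an arbitrary $x\in X$, I would invoke the chain rule: $H$ is differentiable at $x$ by construction and $\tilde f$ is differentiable at $H(x)\in U_0$ by the choice of $\tilde f$, so $g$ is differentiable at $x$ with derivative $D\tilde f(H(x))\circ DH(x)$. The main obstacle—already flagged in the discussion preceding the three differentiability definitions—is the very availability of the chain rule: G\^ateaux derivatives do not compose, which is why the proposition has to be read relative to bounded, compact (Hadamard), or Fr\'echet differentiability. Once one of those notions is fixed the chain rule is standard, and the argument reduces to writing down $g=\tilde f\circ H$ and unpacking the definition of a germ.
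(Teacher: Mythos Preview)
Your proposal is correct and follows exactly the paper's approach: take a local representative defined on a neighborhood $U$, use the blid-differentiable property to obtain a differentiable local-identity map $H$ with $H(X)\subset U$, and set the global representative to be the composition $f\circ H$. If anything, you supply more detail than the paper does---spelling out the agreement on $V$ and the chain-rule step---whereas the paper simply writes $F(x)=f(H(x))$ and declares it the required global representative.
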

\begin{proof}
 Let $f$ be a local representative of the germ defined on a neighborhood $U\subset X$ of zero. Let $H:X \to X$ be a differentiable local identity map such that

$H(X)\subset U$. Then the  map 
$$
                   F(x)=f(H(x)), \ \ x\in X
$$
is a global representative of the germ as we need. 
\qed\end{proof}
Let $X$ be a metric space with a metric $d(x,y)$.  Here we consider germs of maps from $X$ into an arbitrary linear  topological space $Y$.  Although instead of Fr\'echet differentiation (which is not defined for general metric spaces) we use bounded and compact (Hadamar) differentiation. The neighborhoods base $B$ can be chosen as a collection $\{B_c\}_c=\{x\in X: d(x,0)<c\}_c$. Then the space $X$ satisfies differentiable-blid property if for every $c$ there exists a differentiable, local identity map $H_c:X\to X$ such that $d(H_c(x),0)<c$ for all $x$, i.e., $H_c(X)\subset B_c$. 

In particular, if topology on $X$ is defined by countable collection of norms $||x||_{k}$, then the metric can be written as
$$
d(x,y):= \sum_{k=0}^\infty{\frac{1}{2^k}\cdot \frac{||x-y||_k}{||x-y||_k+1} }.
$$

\begin{proposition} Suppose for every $k=0,1,...$ there exists a global differentiable local identity map $\mathcal H_k$ such that
$$
            \sup_x ||\mathcal{H}_k(x)||_k< \infty.
$$
Then $X$ satisfies the differentiable blid property.
\end{proposition}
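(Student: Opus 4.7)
The plan is to reduce the estimate to finitely many seminorms and then rescale a single $\mathcal H_k$. Given any $c>0$, the tail $\sum_{k>N} 2^{-k}$ of the defining series for $d(\cdot,0)$ can be made smaller than $c/2$ by choosing $N$ large, and this bound is independent of whatever map we eventually plug in. So once $N$ is fixed it is enough to construct a differentiable local identity $H_c$ whose image is uniformly small in each of the first $N+1$ seminorms.

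Without loss of generality I would assume the seminorms are non-decreasing, $\|\cdot\|_0 \le \|\cdot\|_1 \le \cdots$; if not, replace $\|\cdot\|_k$ by $\max_{j\le k}\|\cdot\|_j$, which yields an equivalent metric (same neighborhood base at $0$) and preserves the hypothesis on each $\mathcal H_k$. Under this monotonicity, the single map $\mathcal H_N$ is automatically bounded in every lower norm: $\|\mathcal H_N(x)\|_k \le M_N := \sup_x \|\mathcal H_N(x)\|_N < \infty$ for all $k\le N$. This is the point of the reduction: one map controls all the seminorms that still matter.

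To make the bound arbitrarily small while keeping the local-identity property, I would rescale: set
$$H_c(x) := \epsilon\, \mathcal H_N(x/\epsilon)$$
for a small parameter $\epsilon>0$ to be chosen. By the chain rule $H_c$ is differentiable (the dilation $x\mapsto x/\epsilon$ is continuous linear hence differentiable in any of the senses discussed), and if $\mathcal H_N$ equals the identity on a neighborhood $U$ of $0$, then $H_c$ equals the identity on $\epsilon U$. For $k\le N$ one has $\|H_c(x)\|_k = \epsilon \|\mathcal H_N(x/\epsilon)\|_k \le \epsilon M_N$, so
$$d(H_c(x),0) \;\le\; \sum_{k=0}^{N} 2^{-k}\cdot \frac{\epsilon M_N}{\epsilon M_N + 1} \;+\; \sum_{k>N} 2^{-k} \;\le\; 2\epsilon M_N + \tfrac{c}{2}.$$
Choosing $\epsilon < c/(4M_N)$ makes the right-hand side strictly less than $c$, so $H_c(X)\subset B_c$ and $X$ has the blid-differentiable property.

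The only delicate point is the initial monotonicity reduction, since each $\mathcal H_k$ is a priori controlled only in its own norm; after that, the argument is the standard rescaling trick for local identities combined with a routine head/tail split of the metric series. A direct attempt using composition or summation of the various $\mathcal H_k$ without the monotonicity reduction would require cross-norm continuity bounds that are not in the hypothesis, so the WLOG step is what makes the argument clean.
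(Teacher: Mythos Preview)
Your argument follows exactly the paper's route: choose an index large enough that the tail of the metric series is below $c/2$, use monotonicity of the seminorms so that a single $\mathcal H_N$ controls all lower seminorms, and then rescale $\mathcal H_N$ to shrink the head contribution (the paper's explicit scaling factor $c/(4N)$, with $N$ denoting the bound there, is precisely your $\epsilon$).

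One caveat on the monotonicity step: your WLOG justification is not quite sound as written. Passing from $\|\cdot\|_k$ to $\max_{j\le k}\|\cdot\|_j$ does not automatically preserve the hypothesis, because $\mathcal H_k$ is only assumed bounded in the original $k$th norm, not in the lower ones, so it need not be bounded in the new $k$th norm. The paper does not attempt to argue this; it simply treats the monotonicity ``$\|x\|_j$ is increasing in $j$'' as a standing assumption on the seminorm family (which is indeed how the norms are set up in all the examples that follow).
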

\begin{proof} For a given $c>0$ choose any 
\begin{eqnarray}\label{k-c-inequality}
k>1- \ln c/\ln2
\end{eqnarray}
and let $\mathcal H_k$ be such that
$$
                      ||\mathcal H_k(x)||_k <N,\ x\in X.
$$
Set 
$$
             H_c(x)=\frac{c}{4N}\mathcal H_k\left(\frac{4N}{c} x\right).
$$
Then inequality~\ref{k-c-inequality} and the fact that $||x||_j$ is monotonically increasing with $j$ imply that   
$$
                d(H_c(x),0)<c,
$$
i.e. $H_c(X)\in B_c$.
\qed\end{proof}

In the following subsections, we present the examples of the spaces with differentiable blid property and state the existence of extension of locally defined maps on these spaces.
%%%%%%%%%%%%%%%%%%%%
%%%%%%%%%%%
\subsection{The Space of Smooth Functions on the Real Line}
The space $X=C^q(\R)$ ($0\leq q <\infty$) of all smooth functions on $\R$ is endowed with the collection of norms
$$
               ||x||_k=\max_{t\in[-k,k]}\max_{l\leq q}|x^{(l)}(t)|.
$$
\begin{lemma}
The space $X$ possesses the bounded- (consequently compact-)  differentiable blid property.
\end{lemma}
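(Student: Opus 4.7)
The plan is to apply the preceding Proposition: for each $k\geq 0$ we exhibit a global differentiable local identity $\mathcal{H}_k$ on $X=C^q(\R)$ that is uniformly bounded in the norm $||\cdot||_k$. The construction combines the Taylor-type blid for $C^q[0,1]$ from Section~\ref{section-banach} with a cutoff in the variable $t$. Fix smooth auxiliary functions: $h:\R\to\R$ with $h(s)=1$ on $[-1/3,1/3]$ and compact support (so $g(s):=h(s)s$ is smooth with uniformly bounded derivatives of every order), and $\psi_k:\R\to[0,1]$ equal to $1$ on $[-k-1,k+1]$ and vanishing outside $[-k-2,k+2]$. For $q\geq 1$ set
$$
H_\ast(x)(t):=\sum_{j=0}^{q-1}\frac{t^j}{j!}\,g\bigl(x^{(j)}(0)\bigr)+\int_0^t\frac{(t-s)^{q-1}}{(q-1)!}\,g\bigl(x^{(q)}(s)\bigr)\,ds,
$$
with the obvious simplification $H_\ast(x)(t):=g(x(t))$ when $q=0$, and define
$$
\mathcal{H}_k(x)(t):=\psi_k(t)H_\ast(x)(t)+(1-\psi_k(t))x(t).
$$

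Three things then need verification. \emph{Local identity at $0$}: if $||x||_{k+2}<1/3$, then $g$ acts as the identity on every quantity appearing in $H_\ast(x)(t)$ for $|t|\leq k+2$, so Taylor's formula yields $H_\ast(x)(t)=x(t)$ on $[-k-2,k+2]$; combined with $\psi_k\equiv 0$ outside this interval, this gives $\mathcal{H}_k(x)=x$ on all of $\R$. \emph{Boundedness}: on $[-k,k]$ one has $\psi_k\equiv 1$, so $\mathcal{H}_k(x)=H_\ast(x)$ there, and successive differentiation of $H_\ast(x)$ up to order $q$ produces a polynomial in $t$ with coefficients $g(x^{(j)}(0))$ plus an integral of $g(x^{(q)}(\cdot))$ against a polynomial kernel, all bounded by a constant depending only on $k$, $q$ and $\sup_s|g(s)|$, so $\sup_x||\mathcal{H}_k(x)||_k<\infty$. \emph{Differentiability}: the building blocks $x\mapsto x^{(j)}(0)$ and $x\mapsto x^{(q)}(\cdot)$ are continuous linear, the Nemytskii-type composition $y\mapsto g\circ y$ is bounded-differentiable on $C(\R)$ since $g$ is smooth with bounded derivatives, and both integration against a continuous kernel and multiplication by fixed $C^q$ cutoffs are continuous linear into $C^q(\R)$; composing these operations produces a bounded-differentiable map $\mathcal{H}_k:X\to X$.

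The main potential obstacle is the boundedness step. A naive attempt such as $\mathcal{H}_k(x)(t):=\psi_k(t)g(x(t))+(1-\psi_k(t))x(t)$ would fail, because the chain rule forces the higher derivatives $(g(x(t)))^{(l)}$ to involve uncontrolled values of $x^{(l)}(t)$ on $[-k,k]$. The Taylor-based formula above sidesteps this by replacing every occurrence of a derivative of $x$ inside $H_\ast$ with one of the globally bounded quantities $g(x^{(j)}(0))$ or $g(x^{(q)}(s))$; in particular $H_\ast(x)^{(q)}(t)=g(x^{(q)}(t))$ is a priori bounded, and Taylor's remainder formula on the compact set $[-k,k]$ controls the lower-order derivatives as well. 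Once these verifications are complete, the preceding Proposition immediately yields the bounded-differentiable (and hence also compact-differentiable) blid property claimed.
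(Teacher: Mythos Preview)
Your construction is correct and, in fact, more careful than the paper's own argument. The paper simply writes down the single Taylor-type map
\[
H(x)(t)=\sum_{j=0}^{q-1}\frac{t^j}{j!}\,h\bigl(x^{(j)}(0)\bigr)x^{(j)}(0)+\int_0^t\!\!\cdots\!\!\int_0^{t_{q-1}}h\bigl(x^{(q)}(s)\bigr)x^{(q)}(s)\,ds
\]
(your $H_\ast$), declares it a differentiable local identity, and records the bound $\|H(x)\|_k<ae^k$ for every $k$. The bound and the differentiability are exactly what you verify. The subtle point you address, and the paper glosses over, is that in the Fr\'echet topology of $C^q(\R)$ a basic neighborhood of $0$ only controls $x^{(q)}$ on a compact interval, so there is no open neighborhood of $0$ on which $h(x^{(q)}(s))=1$ for \emph{every} $s\in\R$; taken literally, $H_\ast$ alone is not a local identity on $X$. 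Your added device---the spatial cutoff $\psi_k$ together with the correction term $(1-\psi_k(t))x(t)$ restoring the identity outside $[-k-2,k+2]$---closes this gap cleanly, and does so without disturbing the $\|\cdot\|_k$-bound because on $[-k,k]$ one still has $\mathcal H_k=H_\ast$. Thus your route agrees with the paper's on the analytic core (the Taylor blid, the $ae^k$ estimate, the chain-rule argument for bounded differentiability) but supplies the localization in $t$ that the paper suppresses; the only cost is a family $\mathcal H_k$ genuinely depending on $k$, which is precisely what the preceding Proposition permits.
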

\begin{proof}
Let $h(u)$ be a $C^{\infty}$-bump function on $\R$. Note, $a=\sup_{u\in\R}h(u)u<\infty$. Then 
$$
                H(x)(t)=\left\{
\begin{array}{l}
h(x(t))x(t),\  q=0\\
\sum_{j=0}^{q-1}\frac{t^j}{j!}h(x^{(j)}(0))x^{(j)}(0) + \int_0^t\,dt_1 \int_0^{t_1}\,dt_2 ... \int_0^{t_{q-1}} h\left( x^{(q)}(s) \right)x^{(q)}(s) \,ds,\  q\geq 1
\end{array}
\right.
$$
is differentiable local identity map, and
$$
                      ||H(x)||_k< ae^k,\  k=0,1,..., \ x\in X.
$$
\qed\end{proof}
\begin{corollary} Every bounded- (consequently compact-) differentiable germ at $0\in C^q(\R)$ has a global  differentiable (in the corresponding sense) representative.
\end{corollary}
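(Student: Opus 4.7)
The plan is to deduce the corollary by direct combination of the preceding lemma with the general extension principle for spaces with the blid-differentiable property (Proposition~\ref{thm-main}). Since the lemma just established supplies the input hypothesis, only a short verification is needed.

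First I would invoke the lemma: the explicit map $H$ written there, built from a $C^\infty$-bump function $h$ on $\R$ and iterated integrals, is a global bounded-differentiable local identity on $X=C^q(\R)$ whose image sits inside any prescribed $\|\cdot\|_k$-ball once rescaled. Appealing to the proposition that converts a uniformly bounded family $\{\mathcal H_k\}$ of local identities into the blid-differentiable property, one concludes that $C^q(\R)$ satisfies the bounded-differentiable blid property (and hence the compact-differentiable one, since bounded differentiability is the stronger of the two notions considered here).

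Next, given a bounded-differentiable germ $f$ at $0$, defined on some neighborhood $U\subset X$ of zero, I would fix $c>0$ small enough that the ball $B_c$ lies inside $U$ and take $H_c$ provided by the blid-differentiable property with $H_c(X)\subset B_c$. The map
$$
F(x)=f(H_c(x)),\quad x\in X,
$$
is defined globally, coincides with $f$ on the neighborhood where $H_c$ equals the identity, and therefore is a global representative of the germ $[f]$. Bounded differentiability of $F$ follows from the Chain Rule for bounded differentiation applied to the composition of the bounded-differentiable maps $H_c$ and $f$; the analogous argument with compact (Hadamard) differentiation yields the corresponding compact-differentiable statement.

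The only point where any real care is needed is the verification that the Chain Rule holds in the bounded (respectively compact) sense for this composition — this is precisely the reason bounded and Hadamard differentiability were singled out in Section~\ref{section-topological} in place of G\^ateaux. Since $H_c$ is defined on all of $X$ with values in $X$ and $f$ is defined on a neighborhood containing $H_c(X)$, and both possess the appropriate differentiability, the remainder estimates compose in the expected way and $F$ is globally differentiable in the same sense as $f$. No further ingredients are required.
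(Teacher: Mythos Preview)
Your argument is correct and is exactly the intended one: the corollary is immediate from the preceding lemma (which establishes the bounded-differentiable blid property for $C^q(\R)$) together with Proposition~\ref{thm-main}, and the paper accordingly states it without proof. Your explicit unpacking of the composition $F=f\circ H_c$ and the Chain Rule just spells out what the paper leaves implicit.
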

%%%%%%%%%%%%%%%%%%%%%%%
%%%%%%%%%%%%%%%%%%%%%%%%%%%
\subsection{The Space of Infinitely Differentiable Functions on a Closed Interval}

The space $X=C^{\infty}[0,1]$ is endowed with the collection of norms 
$$
 ||x||_k= \max_{j\leq k}\max_{t\in[0,1]}|x^{(j)}(t)|.
$$
\begin{lemma}
The space $X$ possesses the bounded- (consequently compact-) differentiable property.
\end{lemma}
\begin{proof}
Let $h(u)$ be the same bump function on $\R$ as above. Then
$$
        H_{0}(x)(t)=h(x(t))x(t)
$$
is differentiable local identity map, and
$$
           ||H_{0}(x)||_0<a.
$$

   Further, let $k>0$. Then 
$$
H_{k}(x)(t)=\sum_{j=0}^{k-1}\frac{t^j}{j!}h(x^{(j)}(0))x^{(j)}(0) + \int_0^t\,dt_1 \int_0^{t_1}\,dt_2 ... \int_0^{t_{k-1}} h\left( x^{(k)}(s) \right)x^{(k)}(s) \,ds .
$$
is differentiable local identity map, and
$$
                   ||H_{k}(x)||_k< ae^k, \ \ k=0,1,...,\ \ x\in X.
$$
\qed\end{proof}
\begin{corollary}
Every bounded- (consequently compact-) differentiable germ at $0\in C^{\infty}[0,1]$ has a global representative. 
\end{corollary}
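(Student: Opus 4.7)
The plan is to assemble this corollary directly from the two preceding results in this subsection together with Proposition~\ref{thm-main}. The lemma immediately above supplies, for each $k=0,1,\dots$, a differentiable local identity $H_k : X\to X$ with $\sup_x \|H_k(x)\|_k < ae^k$. These $H_k$ are exactly the maps $\mathcal H_k$ required by the proposition that converts a sequence of globally bounded (in the $k$-th seminorm) local identities into the blid-differentiable property. So first I would invoke that proposition to conclude that $X = C^\infty[0,1]$ satisfies the bounded-differentiable (and hence compact-differentiable) blid property.

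Next, given any bounded-differentiable germ $f$ at $0 \in C^\infty[0,1]$, pick a local representative defined on a neighborhood $U$ of $0$ in the Fr\'echet topology given by the metric $d(x,y)=\sum 2^{-k}\|x-y\|_k/(1+\|x-y\|_k)$. The base $\{B_c\}$ of metric balls is a neighborhood base at $0$, so there is some $c>0$ with $B_c \subset U$. The blid-differentiable property yields a global differentiable local identity $H_c : X \to X$ with $H_c(X)\subset B_c \subset U$.

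Finally, the composition $F(x) = f(H_c(x))$ is defined on all of $X$. Since $H_c$ is a differentiable local identity at $0$ and $f$ is differentiable on $U$, the Chain Rule (which bounded and compact differentiability both satisfy, as noted at the start of Section~\ref{section-topological}) ensures $F$ is globally differentiable in the same sense; because $H_c \equiv \mathrm{id}$ on some neighborhood $V$ of $0$, $F$ agrees with $f$ on $V$ and is therefore a global representative of the germ $[f]$. This is precisely the construction already carried out in the proof of Proposition~\ref{thm-main}, which I would simply cite.

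The only step with any substance is matching the data of the lemma to the hypothesis of the intermediate proposition; the rest is a direct application of Proposition~\ref{thm-main}. There is no serious obstacle here — the main work was done in the lemma, where one had to verify that the integral formula for $H_k$ is truly bounded-differentiable (not merely G\^ateaux-differentiable) and satisfies the norm estimate $ae^k$ coming from the iterated integration of length $k$ over $[0,1]$.
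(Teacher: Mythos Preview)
Your proposal is correct and follows exactly the approach the paper intends: the corollary has no written proof in the paper because it is meant to be read off immediately from the preceding lemma (which, via the intermediate proposition, gives the blid-differentiable property for $C^\infty[0,1]$) together with Proposition~\ref{thm-main}. Your write-up simply makes this implicit chain explicit, and there is nothing to add.
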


%%%%%%%%%%%%%%%%%%%%%%%
%%%%%%%%%%%%%%%%%%%%%%%%%%%
\subsection{The Space of Infinitely Differentiable Functions on the Real Line}

The space $X=C^{\infty}(\R)$ is endowed with the collection of norms 
$$
 ||x||_{k}= \max_{j\leq k}\max_{t\in[-k,k]}|x^{(j)}(t)|, \ k=0,1,2,...
$$
\begin{lemma}
The space $X$ possesses the bounded- (consequently compact-) differentiable property.
\end{lemma}
\begin{proof}
Let $h (u)$ be the same bump function on $\R$ as above. Then
$$
        H_{0}(x)(t)=h(x(t))x(t)
$$
is differentiable local identity map, and
$$
           ||H_{0}(x)||_0<a.
$$

   Further, let $k>0$. Then 
$$
H_{k}(x)(t)=\sum_{p=0}^{k-1}\frac{t^p}{j!}h(x^{(p)}(0))x^{(p)}(0) + \int_0^t\,dt_1 \int_0^{t_1}\,dt_2 ... \int_0^{t_{k-1}} h\left( x^{(k)}(s) \right)x^{(k)}(s) \,ds .
$$
is differentiable local identity map, and
$$
                  ||H_{k}(x)||_k< ae^k, \ \ k=0,1,...,\ \ x\in X.
$$
\qed\end{proof}
\begin{corollary}
Every bounded- (consequently compact-) differentiable germ at $0\in C^{\infty}(\R)$ has a global representative. 
\end{corollary}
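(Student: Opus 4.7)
The plan is to chain together three results already established in the paper: the preceding lemma, the general proposition on differentiable blid property for metric spaces with a countable family of seminorms, and Proposition~\ref{thm-main}.

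First I would invoke the preceding lemma, which produces for each $k=0,1,2,\dots$ a bounded-differentiable local identity map $H_k: X\to X$ with $\|H_k(x)\|_k < ae^k$ for all $x\in X$. The key point is that this bound is uniform in $x$, so in particular $\sup_x \|H_k(x)\|_k < \infty$. Thus the family $\{H_k\}$ is exactly the family of maps $\{\mathcal H_k\}$ required by the earlier proposition in Section~\ref{section-topological} about metric spaces whose topology is defined by a countable collection of norms.

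Next I would apply that proposition. Given any neighborhood $U$ of $0$ in $X$, there is some $c>0$ with $B_c\subset U$ (using the metric $d$ built from the seminorms $\|\cdot\|_k$). The proposition's rescaling $H_c(x) := (c/4N)\,\mathcal H_k(4N x/c)$ — with $k$ chosen large enough that $1 - \ln c/\ln 2 < k$ and $N$ an upper bound for $\|\mathcal H_k\|_k$ — yields a global differentiable local identity map whose image is contained in $B_c\subset U$. Hence $C^\infty(\R)$ satisfies the blid-differentiable property in the bounded- (and consequently compact-) differentiable sense.

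Finally I would apply Proposition~\ref{thm-main}: given any bounded- or compact-differentiable germ $f:X\to Y$ defined on a neighborhood $U$ of $0$, produce $H_c$ with $H_c(X)\subset U$ from the previous step, and set $F(x)=f(H_c(x))$. Since $H_c$ coincides with the identity near $0$, $F$ represents the same germ; differentiability of $F$ on all of $X$ follows from the Chain Rule, which is the reason bounded/compact (Hadamard) differentiability was selected in Section~\ref{section-topological} in the first place. I do not expect a genuine obstacle here — the corollary is a clean specialization of the metric-space machinery already assembled — the only point worth double-checking is that affine rescaling preserves bounded-differentiability of $\mathcal H_k$, which is immediate.
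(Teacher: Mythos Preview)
Your proposal is correct and is exactly the intended argument: the paper states the corollary with no explicit proof because it follows immediately by chaining the preceding lemma (producing the family $\{H_k\}$ with $\sup_x\|H_k(x)\|_k<\infty$), the general proposition on countable-seminorm spaces, and Proposition~\ref{thm-main}. Your unpacking of these steps, including the rescaling and the Chain Rule for bounded/compact differentiability, matches the paper's machinery precisely.
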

%%%%%%%%%%%%%%%%%%%%%%%%%%%%
%%%%%%%%%%%%%%%%%%%%%%%%%%%%%%

%%%%%%%%%%%%%%%%%%%%%%%%%%
%%%%%%%%%%%%%%%%%%%%%%%%%%%
%%%%%%%%%%%%%%%%%%%%%%%%%%%    
\section{Applications}\label{section-applications}
Frequently, in the questions of local analysis and local dynamical systems bump functions are used. Since blid maps substitute the bump functions, they allow localization of a broader class of spaces. First, the blid maps were used in \cite{B} for smooth conjugation of two $C^{\infty}$ diffeomorphisms on some Banach spaces. In the later works \cite{BR} and \cite{R} we discuss the conditions when two $C^{\infty}$ diffeomorphisms on some Banach spaces are locally $C^{\infty}$-conjugate. Below, we discuss applications of blid maps to the differentiable linearization (without non-resonance assumption), and applications to the cohomological equations. For the proofs of these results we need the Borel Lemma for Banach spaces. With the help of blid-maps we are able to prove these results for Banach spaces. 

%%%%%%%%%%%%%%%%%%%
\subsection{The Borel Lemma}
In this section we state the Borel Lemma proved in \cite{BR1}. For finite dimensional $X$, the Borel Lemma is a particular case of the celebrated Whitney theorem on the extension from a closed set. The use of blid-maps in our proofs is analogous to the use of the bump functions in the proofs of finite-dimensional case, but infinite dimensional version of the proof requires some estimates on the growth of the derivatives of the blid-maps.
\begin{theorem}[The Borel lemma]\label{thm-Borel} Let a Banach space X possess a $C^\infty$-blid map with bounded derivatives of all orders.
Then for any Banach space Y and any sequence $\{P_j\}_{j=0}^{\infty}$ of continuous homogeneous polynomial maps from $X$ to $Y$ there is a
$C^\infty$-map $f: X\to Y$ with bounded derivatives of all orders such that   $P_j(x)=f^{(j)}(0)(x)^j$ is satisfied for all $j=0,1,...$
 \end{theorem}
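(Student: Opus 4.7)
The plan is to adapt the classical $\R^n$ Borel construction, replacing the (unavailable) cut-off bump function by rescaled copies of the given blid map $H$.

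First, I would rescale: for $\varepsilon>0$, set $H_\varepsilon(x):=\varepsilon H(x/\varepsilon)$. Because $H$ is identically the identity on some ball $\{\|x\|<\delta\}$ and each $M_k:=\sup_x\|D^kH(x)\|$ is finite, $H_\varepsilon$ is the identity on $\{\|x\|<\varepsilon\delta\}$ and satisfies $\sup_x\|D^kH_\varepsilon(x)\|=\varepsilon^{1-k}M_k$. For a sequence $\varepsilon_j\downarrow 0$ to be chosen, define
$$
    f_j(x):=\tfrac{1}{j!}\,P_j\bigl(H_{\varepsilon_j}(x)\bigr),\qquad f(x):=\sum_{j=0}^{\infty}f_j(x).
$$
Near $0$ each $f_j$ coincides with the homogeneous polynomial $P_j/j!$, so $f_j^{(k)}(0)=0$ for $k\ne j$ while $f_j^{(j)}(0)$ is exactly the required jet. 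Thus, once I know the series converges in $C^k$ for every $k$, termwise differentiation at $0$ gives $f^{(k)}(0)=f_k^{(k)}(0)$, which recovers $P_k$.

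Next I would estimate $\|D^k f_j\|_\infty$ via a Fa\`a di Bruno expansion. Each term in $D^k(P_j\circ H_{\varepsilon_j})(x)$ has the shape $D^mP_j(H_{\varepsilon_j}(x))$ applied to a product of $m$ derivatives $D^{k_i}H_{\varepsilon_j}$ with $\sum k_i=k$. Combining the homogeneity bound $\|D^mP_j(y)\|\le C_{j,m}\|y\|^{j-m}$, the estimate $\|H_{\varepsilon_j}(x)\|\le\varepsilon_j M_0$, and $\|D^{k_i}H_{\varepsilon_j}\|_\infty=\varepsilon_j^{1-k_i}M_{k_i}$, the $\varepsilon_j$-exponents consolidate into one factor, giving
$$
    \|D^k f_j\|_\infty\le C(j,k)\,\varepsilon_j^{\,j-k},
$$
where $C(j,k)$ depends only on $\|P_j\|$, the $M_i$, and combinatorial constants. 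For each $j$ one can then choose $\varepsilon_j\in(0,1]$ so small that $C(j,k)\varepsilon_j^{j-k}\le 2^{-j}$ simultaneously for every $k=0,1,\dots,j-1$ (a finite list of inequalities for each $j$).

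With that choice, for any fixed $k$ the tail $\sum_{j>k}\|D^k f_j\|_\infty\le\sum_{j>k}2^{-j}<\infty$, while the first $k+1$ terms have bounded $k$-th derivative directly from the construction. Hence $f$ is $C^\infty$ on $X$ with derivatives of every order bounded, and termwise differentiation legitimately delivers the Borel identities $P_j(x)=f^{(j)}(0)(x)^j$ (with the appropriate $j!$-convention). The main obstacle I anticipate is the Fa\`a di Bruno bookkeeping above: one must check that the positive powers of $\varepsilon_j$ produced by the homogeneity of $P_j$ near $0$ exactly cancel the negative powers introduced by the higher derivatives of $H_{\varepsilon_j}$, so that a single choice of $\varepsilon_j\to 0$ controls all orders $k<j$ at once. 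This cancellation is precisely the step that uses the hypothesis that \emph{every} derivative of $H$ is bounded.
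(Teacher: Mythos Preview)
Your argument is correct and follows precisely the strategy the paper itself advertises. Note that the present paper does not actually include a proof of the Borel lemma; it only states the result and defers to \cite{BR1}, remarking that ``the use of blid-maps in our proofs is analogous to the use of the bump functions in the proofs of finite-dimensional case, but infinite dimensional version of the proof requires some estimates on the growth of the derivatives of the blid-maps.'' That is exactly what you do: replace the classical multiplication by a bump with composition by the rescaled blid $H_{\varepsilon_j}$, and then carry out the Fa\`a di Bruno bookkeeping to verify that the exponents consolidate to $\varepsilon_j^{\,j-k}$, so a single choice of $\varepsilon_j$ handles all $k<j$. The remaining finitely many terms $j\le k$ are bounded simply because each $f_j$ is a polynomial composed with a map of bounded range and bounded derivatives. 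Your identification of the jet at $0$ via $H_{\varepsilon_j}=\mathrm{id}$ on $\{\|x\|<\varepsilon_j\delta\}$ is also the intended mechanism. In short, your proposal matches the paper's (cited) approach and the details you outline go through.
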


%%%%%%%%%%%%%%%%%%
\subsection{Cohomological Equations}\label{section-cohomol}
In this section we outline the main ideas of the application of the blid maps to the solutions of cohomological equations. For the detailed discussion please see \cite{BR1}.
Given a map $F : X \to X$, ($X$ is a Banach space) we want to find a $C^{\infty}$ $g: X\to \C$, that satisfies the following cohomological equation:
\begin{equation}\label{coh-eqn}
g(Fx) - g(x) = f(x)
\end{equation} 
For a broad overview of various versions of the equation see the works of Yu.I. Lyubich (e.g., \cite{L}). Also, for a discussion of smooth cohomological equations we recommend the book \cite{B-T}. 
\\
In our example, we will assume that $F$ is linear and denote it by $A$.

Define a homogenious polinomial map $P_n(x)=f^{(n)}(0)(x)^n$. We will search for a homogeneous, degree $n$, polynomial solutions $Q_n(x)$ ($n=1,2...$) such that
\begin{eqnarray*}
\hspace{1in}{(L_n - id )Q_n(x) = P_n(x),\  \  n=1,2,3....,} \hspace{1.2in} (n)
\end{eqnarray*}
where $L_nQ_n (x)= (Q_n(Ax))^{(n)}$. 

If for every $n$ equation~($n$) is solvable, we call the cohomological equation~(\ref{coh-eqn}) formally solvable. Then we can use Borel Lemma to reduce the equation~(\ref{coh-eqn})  to the equation in flat functions (with $0$ Taylor coefficients).
 Then, applying some decomposition results (see~\cite{BR1}) for the space $X$, we can formulate the conditions for the solvability of the original cohomological equation: 
\begin{theorem} Let $A$  be a hyperbolic linear automorphism, and $X$ possesses a $C^\infty$-blid map with bounded derivatives on $X$. If all derivatives of f are bounded on every bounded subset, and cohomological eqn. is formally solvable at zero (i.e. each $n$-th equation has continuous solution, $n=1,2,...$), then there exists a global $C^\infty$-solution $g(x)$.
 \end{theorem}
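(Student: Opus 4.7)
The plan is to reduce to the case of a flat right-hand side via the Borel Lemma, and then to solve the flat case by hyperbolic Birkhoff sums on the stable and unstable subspaces of $A$.

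For the reduction, the formal solvability hypothesis provides homogeneous polynomial solutions $Q_n$ of $(L_n-\mathrm{id})Q_n=P_n$ for every $n\ge 1$. Since $X$ carries a $C^\infty$-blid map with bounded derivatives of all orders, the Borel Lemma (Theorem~\ref{thm-Borel}) supplies a $C^\infty$ function $\tilde g:X\to\C$ with bounded derivatives of all orders such that $\tilde g^{(n)}(0)(x)^n=Q_n(x)$ for each $n$. Set $\tilde f(x):=\tilde g(Ax)-\tilde g(x)$. By the chain rule $\tilde f^{(n)}(0)(x)^n=(L_n-\mathrm{id})Q_n(x)=P_n(x)$, so $\psi:=f-\tilde f$ is $C^\infty$, flat at $0$, and has derivatives bounded on every bounded subset. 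Constructing a $C^\infty$ solution $g_0$ of $g_0(Ax)-g_0(x)=\psi(x)$ then yields $g:=\tilde g+g_0$ as the desired global solution of~\eqref{coh-eqn}.

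For the flat case, I would use the hyperbolic splitting $X=X_s\oplus X_u$ with $\|A|_{X_s}\|<1$ and $\|A^{-1}|_{X_u}\|<1$, together with the associated projectors $\pi_s,\pi_u$. On the pure subspaces the Birkhoff sums
\[
g_s(x):=-\sum_{k=0}^\infty \psi(A^k x)\ (x\in X_s),\qquad g_u(x):=\sum_{k=1}^\infty \psi(A^{-k}x)\ (x\in X_u),
\]
together with all their termwise derivatives, converge absolutely and uniformly on bounded sets: the exponential contraction of $A^k$ on $X_s$ (respectively $A^{-k}$ on $X_u$) combines with the super-polynomial decay of $\psi$ near $0$ coming from flatness. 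By the projector example in Section~\ref{section-banach}, both $X_s$ and $X_u$ inherit $C^\infty$-blid maps with bounded derivatives from the blid map on $X$, and these are used to assemble $g_s$ and $g_u$ — together with a correction term handling the mixed remainder $\psi(x)-\psi(\pi_s x)-\psi(\pi_u x)$, which is itself flat at $0$ — into a globally defined $C^\infty$ function $g_0$ on $X$ satisfying the equation everywhere.

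The principal obstacle is the last step of Stage 2: controlling every derivative of the Birkhoff series uniformly on arbitrary bounded subsets of $X$, and performing the gluing without losing smoothness. One needs the twin estimates — exponential contraction/expansion from hyperbolicity of $A$, and super-polynomial vanishing from flatness of $\psi$ — to jointly dominate the exponential growth in $k$ produced by differentiating $\psi(A^{\pm k}x)$. The hypothesis that the blid map on $X$ has bounded derivatives of all orders enters twice: once in Stage 1 to invoke the Borel Lemma, and once more in Stage 2 to endow $X_s$ and $X_u$ with bounded-derivative blid maps, so that the gluing produces a $C^\infty$ function whose derivatives remain bounded on bounded subsets.
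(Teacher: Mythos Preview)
Your two-stage plan---Borel reduction to a flat right-hand side, then hyperbolic splitting with Birkhoff sums---is precisely the outline the paper gives (the paper does not prove the theorem here; it refers to \cite{BR1} for the ``decomposition results''). Stage~1 is carried out correctly.

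Stage~2, however, has a real gap in the specific decomposition you propose. If you extend $g_s,g_u$ by $\hat g_s(x)=g_s(\pi_s x)$, $\hat g_u(x)=g_u(\pi_u x)$, the residual right-hand side is your mixed term $\phi(x)=\psi(x)-\psi(\pi_s x)-\psi(\pi_u x)$. This $\phi$ vanishes on $X_s\cup X_u$ and is flat at $0$, but it is \emph{not} flat along either subspace, so neither Birkhoff series $\sum\phi(A^{\pm k}x)$ converges on all of $X$. In fact one can have $\phi=\psi$ (take any flat $\psi$ that already vanishes on $X_s\cup X_u$, e.g.\ $\psi(x_s,x_u)=\rho(\|x\|)\,B(x_s,x_u)$ with $\rho$ flat at $0$ and $B$ bilinear), so the mixed-term step does not reduce the problem at all.

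The route indicated by the paper goes the other way: one decomposes the flat \emph{datum} $\psi$ as $\psi_1+\psi_2$ with $\psi_1$ flat along $X_u$ and $\psi_2$ flat along $X_s$; this is the ``decomposition result'' the paper cites, and it is where the blid maps on the complementary subspaces are actually used. With that decomposition the forward Birkhoff sum $-\sum_{k\ge 0}\psi_1(A^k x)$ and the backward sum $\sum_{k\ge 1}\psi_2(A^{-k}x)$ converge, with all derivatives, on every bounded subset of $X$ (flatness along the expanding direction absorbs the exponential growth of $A^{\pm k}$), and no gluing of partial solutions is required.
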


%%%%%%%%%%%%%%%%%%%%%%

\subsection{Differentiable Linearization without Non-resonance Assumption}\label{section-lin}
Local linearization and normal forms are convenient simplification of complex dynamics. In this section we discuss differentiable linearization on Banach spaces.
For a diffeomorphism $F$ with a fixed point $0$, we would like to find a smooth transformation $\Phi$ defined in a neighborhood of $0$ such that $\Phi\circ F \circ \Phi^{-1}$ has a simplified (polynomial) form called the normal form. If $\Phi\circ F \circ \Phi^{-1}=DF=\Lambda $ is linear, the conjugation is called linearization. There are two major questions in this area of research: how to increase smoothness of the conjugation $\Phi$, and whether it is sufficient to assume low smoothness of the diffeomorphism $F$.

Hartman and Grobman independently showed that if $\Lambda$ is hyperbolic, then for a diffeomorphism $F$ there exists a local homeomorphism $\Phi$ such that $\Phi \circ F \circ \Phi^{-1}=\Lambda$. Different proofs were given by Pugh in \cite{Pu}. A higher regularity of $\Phi$ has been an active area of research (see, for example, \cite{P}, \cite{vS}, \cite{GHR}, \cite{ZLZ}).

The first attempt to answer the question of differentiability of $\Phi$ at the fixed point $0$ under hyperbolicity assumption was made in \cite{vS}, but an error was found and discussed in \cite{R1}. Later, in \cite{GHR}, Guysinsky, Hasselblatt and Rayskin presented correct proof. However, it was restricted to $F\in C^{\infty}$ (or more precisely, it was restricted to $F\in C^k$, where $k$ is defined by complicated expression). It was conjectured in the paper that the result is correct for $F\in C^2$, as it was announced in \cite{vS}.

Zhang, Lu and Zhang, in their Theorem 7.1 published in~\cite{ZLZ} showed that for a Banach space diffeomorphism $F$ with a hyperbolic fixed point and $\alpha$-H{\"o}lder $DF$, the local conjugating homeomorphism $\Phi$
is differentiable at the fixed point. Moreover, 
$$
\Phi(x) = x+O(||x||^{1+\beta}) \mbox{\  and \ } \Phi^{-1}(x) = x+O(||x||^{1+\beta})
$$
as $x\to 0$, for certain $\beta \in (0,\alpha]$. 

There are two additional assumptions in this theorem. The first one is the spectral band width inequality. The authors explain that this inequality is sharp if the spectrum has at most one connected component inside of the unit circle in $X$, and at most one connected component outside of the unit circle in $X$. For the precise formulation of the spectral band width condition we refer the reader to the paper \cite{ZLZ}.
It is important (and it is pointed out in \cite{ZLZ}) that this is not a non-resonance condition. The latter is required for generic linearization of higher smoothness. 

The second assumption is the assumption that the Banach space must possess smooth bump functions. 
It is conjectured in the paper that the second assumption is a necessary condition. 

In this section we explain that this conjecture is not correct (see Theorem~\ref{thm-diff}). The bump function condition can be replaced with the less restrictive blid map condition. 
Blid maps allow to reformulate Theorem 7.1 in the following way:
\begin{theorem}\label{thm-diff}
Let $X$ be a Banach space possessing a differentiable blid map with bounded derivative.
Suppose $F:X\to X$ is a diffeomorphism with a hyperbolic fixed point,  $DF$ is $\alpha$-H{\"o}lder, and the spectral band width condition is satisfied.
Then, there exists local linearizing homeomorphism $\Phi$ which 
is differentiable at the fixed point. Moreover, 
$$
\Phi(x) = x+O(||x||^{1+\beta}) \mbox{\  and \ } \Phi^{-1}(x) = x+O(||x||^{1+\beta})
$$
as $x\to 0$, for certain $\beta \in (0,\alpha]$. 
\end{theorem}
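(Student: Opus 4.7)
The plan is to follow the proof of Theorem~7.1 of \cite{ZLZ} essentially line-by-line, but to replace every use of a smooth bump function for extending the diffeomorphism $F$ by a construction that uses only the blid map $H$ provided by hypothesis. Specifically, where \cite{ZLZ} cuts off the nonlinear part $g(x) := F(x) - \Lambda x$ (with $\Lambda = DF(0)$) by multiplication with a bump, we instead ``cut it off'' by precomposition with a rescaled blid map.

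First I would form $H_{\varepsilon}(x) := \varepsilon\, H(x/\varepsilon)$ for a small parameter $\varepsilon>0$. Since $H$ is a local identity at $0$ whose image and derivative are bounded, $H_{\varepsilon}$ coincides with the identity on a ball $\{||x||<r_0 \varepsilon\}$, satisfies $\sup_x ||H_{\varepsilon}(x)||\le M\varepsilon$, and has $\sup_x ||DH_{\varepsilon}(x)||\le L$ with $L$ independent of $\varepsilon$. Define the global map
$$
\tilde{F}(x) \ := \ \Lambda x + g\bigl(H_{\varepsilon}(x)\bigr) \ = \ \Lambda x + F\bigl(H_{\varepsilon}(x)\bigr) - \Lambda H_{\varepsilon}(x).
$$
By construction $\tilde{F}\equiv F$ on the neighborhood $\{||x||<r_0\varepsilon\}$ of the fixed point. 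Using $DF(0)=\Lambda$ together with the $\alpha$-H\"older property of $DF$ one obtains, with a constant $C$ depending only on $H$ and the H\"older data of $DF$,
$$
||D\tilde{F}(x) - \Lambda|| \ \le \ ||DF(H_{\varepsilon}(x))-\Lambda||\cdot||DH_{\varepsilon}(x)|| \ \le \ C\,\varepsilon^{\alpha}
$$
uniformly in $x\in X$, and a parallel chain-rule computation shows that $D\tilde{F}$ is $\alpha$-H\"older on all of $X$ with seminorm that can be made arbitrarily small by shrinking $\varepsilon$.

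Once $\tilde{F}$ is available, I would run the ZLZ argument with $F$ replaced by $\tilde{F}$. The construction of the global conjugating homeomorphism $\Phi$ with $\Phi\circ\tilde{F}\circ\Phi^{-1}=\Lambda$ uses only the hyperbolicity of $\Lambda$, the spectral band width condition, and the fact that $\tilde{F}$ is a globally small $\alpha$-H\"older perturbation of $\Lambda$; no bump function enters at that stage. Since $\tilde{F}=F$ near $0$, this $\Phi$ is the required local linearization of $F$. The differentiability of $\Phi$ at $0$ and the asymptotic expansions $\Phi(x)=x+O(||x||^{1+\beta})$ and $\Phi^{-1}(x)=x+O(||x||^{1+\beta})$ then follow from exactly the same estimates as in \cite{ZLZ}, because those estimates depend only on the behavior of $\tilde{F}$ on an arbitrarily small neighborhood of $0$, and on this neighborhood $\tilde{F}$ agrees with $F$.

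The main obstacle I anticipate is purely technical: verifying that the H\"older constants of $D\tilde{F}$ on $X$ do not blow up in a way that would violate the quantitative hypotheses (most delicately, the spectral band width inequality) used inside the ZLZ estimates. Because $DH_{\varepsilon}$ is bounded uniformly in $\varepsilon$ and the global H\"older seminorm of $D\tilde{F}$ can be made arbitrarily small by shrinking $\varepsilon$, this check goes through, but it is precisely where the hypothesis that the blid map has \emph{bounded} derivative, rather than merely existing, is used in an essential way. The remainder of the proof is a bookkeeping exercise that transports the ZLZ calculation through the composition with $H_{\varepsilon}$.
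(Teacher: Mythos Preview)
Your approach is essentially the paper's: replace the bump-function cut-off in \cite{ZLZ} by precomposition of the nonlinear part $g=F-\Lambda\cdot$ with the rescaled blid map $H_\varepsilon(x)=\varepsilon H(x/\varepsilon)$ (the paper writes $\delta H(x/\delta)$), verify the quantitative hypotheses of \cite{ZLZ}, and then invoke their argument verbatim. The first estimate you give, $\|D\tilde F(x)-\Lambda\|\le \|DF(H_\varepsilon(x))-\Lambda\|\,\|DH_\varepsilon(x)\|\le C\varepsilon^\alpha$, is exactly the paper's bound on $\sup_x\|D\tilde f(x)\|$.

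There is one technical overreach. You assert that ``a parallel chain-rule computation shows that $D\tilde F$ is $\alpha$-H\"older on all of $X$''. With the stated hypothesis ($H$ merely $C^1$ with bounded derivative) this does not follow: writing $D\tilde F(x)-D\tilde F(y)=\bigl(Dg(H_\varepsilon(x))-Dg(H_\varepsilon(y))\bigr)DH_\varepsilon(x)+Dg(H_\varepsilon(y))\bigl(DH_\varepsilon(x)-DH_\varepsilon(y)\bigr)$, the second summand would require a modulus of continuity for $DH$, which you have not assumed. The paper sidesteps this by observing that \cite{ZLZ} does not need global H\"older continuity of $D\tilde F$ at all; what is actually required is their condition~(7.6), namely global smallness of $D\tilde F-\Lambda$ together with the \emph{pointwise} H\"older bound $\sup_{x\ne 0}\|D\tilde F(x)-\Lambda\|/\|x\|^\alpha<\infty$ (i.e.\ H\"older continuity \emph{at the origin}). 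The paper then checks this second inequality directly via
\[
\frac{\|D\tilde f(x)\|}{\|x\|^\alpha}\le \frac{\|Df(\delta H(x/\delta))\|}{\|\delta H(x/\delta)\|^\alpha}\cdot\left(\frac{\|\delta H(x/\delta)\|}{\|x\|}\right)^{\alpha}\cdot\|DH(x/\delta)\|,
\]
bounding the middle factor separately for small and large $x$ using only boundedness of $H$ and $DH$. Once you replace ``$D\tilde F$ is globally $\alpha$-H\"older'' by ``$\tilde F$ satisfies condition~(7.6) of \cite{ZLZ}'', your argument and the paper's coincide.
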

In particular, we have the following 
\begin{corollary}
Let $X=C^q[0,1]$.
Suppose $F:X\to X$ is a diffeomorphism with a hyperbolic fixed point,  $DF$ is $\alpha$-H{\"o}lder, and the spectral band width condition is satisfied.
Then, the local conjugating homeomorphism $\Phi$
is differentiable at the fixed point. Moreover, 
$$
\Phi(x) = x+O(||x||^{1+\beta}) \mbox{\  and \ } \Phi^{-1}(x) = x+O(||x||^{1+\beta})
$$
as $x\to 0$, for certain $\beta \in (0,\alpha]$. 
\end{corollary}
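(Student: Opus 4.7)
The proof plan is essentially a one-line reduction to Theorem~\ref{thm-diff}, since the corollary is the specialization of that theorem to the concrete Banach space $X=C^{q}[0,1]$. The only thing requiring verification is that $C^{q}[0,1]$ meets the hypothesis of Theorem~\ref{thm-diff}, namely that it possesses a differentiable blid map with bounded derivative. All of the dynamical hypotheses on $F$ (hyperbolic fixed point, $\alpha$-H\"older $DF$, spectral band width) are inherited verbatim from the statement.

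The plan is therefore as follows. First, invoke the explicit blid map for $X=C^{q}[0,1]$ constructed earlier in the paper, namely
\[
H(x)(t)=\sum_{j=0}^{q-1}\frac{t^{j}}{j!}h(x^{(j)}(0))x^{(j)}(0)+\int_{0}^{t}\!dt_{1}\!\int_{0}^{t_{1}}\!dt_{2}\cdots\!\int_{0}^{t_{q-1}}h\!\left(x^{(q)}(s)\right)x^{(q)}(s)\,ds,
\]
where $h$ is a smooth bump function on $\R$ that equals $1$ near $0$ and has compact support. Second, observe that $H$ is a $C^{\infty}$ local identity on $X$: near $0\in X$, the argument $x^{(j)}(0)$ and $x^{(q)}(s)$ all lie in the support region where $h\equiv 1$, so $H(x)(t)$ reduces to the Taylor-with-integral-remainder reconstruction of $x(t)$. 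Third, note that each $h(\cdot)\,\cdot$ is a bounded $C^{\infty}$ function on $\R$ with bounded derivatives of all orders, so that each term in $H$ is globally bounded together with its derivatives in the $C^{q}[0,1]$ norm; in particular the Fr\'echet derivative $DH$ is bounded. Thus $X=C^{q}[0,1]$ satisfies the hypothesis of Theorem~\ref{thm-diff}.

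Finally, apply Theorem~\ref{thm-diff} directly: it yields a local linearizing homeomorphism $\Phi$ that is differentiable at the fixed point, with the stated asymptotic expansions $\Phi(x)=x+O(\|x\|^{1+\beta})$ and $\Phi^{-1}(x)=x+O(\|x\|^{1+\beta})$ for some $\beta\in(0,\alpha]$. There is no real obstacle, since the heavy lifting (the linearization construction using the blid map in place of a bump function) is done inside Theorem~\ref{thm-diff}; the only step that deserves care in a written-out proof is the verification that the stated blid map truly acts as the identity on a whole $\|\cdot\|_{C^{q}}$-neighborhood of $0$ and that $DH$ is uniformly bounded on $X$, both of which follow from the standard properties of the bump function $h$ and the triangle/sup-norm estimates on the iterated integrals.
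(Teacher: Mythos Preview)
Your proposal is correct and matches the paper's approach: the corollary is stated without a separate proof because it follows immediately from Theorem~\ref{thm-diff} once one invokes the explicit $C^{\infty}$-blid map on $C^{q}[0,1]$ constructed earlier in the paper. Your additional verification that this blid map has bounded first derivative is exactly the point needed to meet the hypothesis of Theorem~\ref{thm-diff}.
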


Below we justify Theorem~\ref{thm-diff}
\begin{proof}
Zhang, Lu and Zhang showed that for the conclusion of their Theorem 7.1 it is enough to satisfy the inequalities 1 and 2 (see  \ref{ineq} below),  which are called condition (7.6) in their paper.

In order to apply the blid maps instead of bump functions to the inequalities \ref{ineq}, it is sufficient to construct a bounded blid map, which has only first-order bounded derivative. I.e., let blid map $H(x): X\to X$ be as follows:
\begin{eqnarray}
\begin{array}{l}
\mbox{1. \ }  H(x) = x \mbox{\ for\ } ||x||<1\\
\mbox{2. \ } H\in C^1 \mbox{\ and\ } ||H^{(j)}(x)||\leq c_j,  \ j=0,1 .
\end{array}
\end{eqnarray} 

The condition (7.6) of \cite{ZLZ} is:
\begin{eqnarray}\label{ineq}
\begin{array}{l}
\mbox{1. \ } \sup_{x\in X}||DF(x) -\Lambda|| \leq \delta_{\eta}\\
\mbox{2. \ } \sup_{x\in V\setminus O}\left\{||DF(x) - \Lambda|| / ||x||^{\alpha}\right\} = M < \infty 
\end{array}
\end{eqnarray} 

Let $DF -\Lambda = f$. Define
$$
\tilde{f}(x):= f\left( \delta H(x/ \delta) \right)
$$

We will show that if $f$ satisfies (7.6), then so does $\tilde{f}$.
$$
\sup_{x\in X}|| D\tilde{f}(x)|| \leq \sup_{x\in X} || D f(x) || \cdot \sup_{x\in X} || DH(x) || \leq \delta_{\eta} \cdot c_1.
$$
Thus, the first inequality of (7.6) holds for $\tilde{f}$. For the second inequality we have the following estimate:
$$
\frac{|| D \tilde{f}(x)||}{||x||^{\alpha}} \leq \frac{|| Df \left(\delta H(x/\delta)\right) ||}{|| \delta H(x/\delta) ||^{\alpha}} \cdot \left( \frac{|| \delta H(x/\delta) ||}{||x||}  \right)^{\alpha}.
$$
The second multiple is bounded, because for small $x$ (say, $||x/\delta||<\epsilon$ for some $\epsilon>0$) we have
$$
\frac{|| \delta H(x/\delta) ||}{||x||} < c_1 + o(1),
$$
while for $||x/\delta|| \geq \epsilon$
$$
\frac{|| \delta H(x/\delta) ||}{||x||} < c_0/\epsilon.
$$
I.e., $\frac{|| \delta H(x/\delta) ||}{||x||}$ is less than some constant $m$.
Then, 
$$
\sup_{x\in V\setminus O}\frac{|| D \tilde{f}(x)||}{||x||^{\alpha}} \leq
\sup_{0<||x||<\delta c_0}\left\{||D f(x)|| / ||x||^{\alpha}\right\} \cdot \sup_{x\in X}|| D H(x)|| \cdot m^{\alpha}
$$
$$=\sup_{0<||x||<\delta c_0}\left\{||D f(x)|| / ||x||^{\alpha}\right\}c_1\cdot m^{\alpha}.
$$

This quantity is bounded by $M c_1 m^{\alpha}$ if $\delta$ is sufficiently small.
\qed\end{proof}

%%%%%%%%%%%%%%%%%%%%%%
%%%%%%%%%%%%%%%%%%%%%%
\section{More examples and open questions}\label{sec-questions}

One of the important questions of local analysis on Banach spaces is the following. Do Banach spaces without smooth blid maps  exist? Recently, affirmative answer was presented in \cite{DH} (also see \cite{HJ}). The authors proved that there exist Banach spaces that do not allow $C^2$-extension (and hence the $C^2$-blid map).
\begin{question}\label{Q-arbitrary-space} For which spaces do smooth blid maps exist? 
Do they exist on $l_p$, with non-even $p$?
\end{question}

\begin{question}\label{questionC-1} 
Are there Banach spaces without differentiable blid maps? 
\end{question}

In the Theorem~\ref{thm-extension-Banach} we considered a $C^q$-germ at a point. For such germs the existence of a local representative with bounded derivatives implies the existence of the global one with the same properties.

How can we extend germs of maps defined at a closed subset $S\subset X$? For this construction we need to define smooth blid maps at $S$. More precisely, generalizing the definition of germs at a point, we will say that maps $f_1$ and $f_2$ from neighborhoods $U_1$ and $U_2$ of $S$ into $Y$ are {\it equivalent}, if they coincide in a (smaller) neighborhood of $S$.
Every equivalence class is called a {\it germ at $S$}. We pose the same question. Given a $C^q$-germ at $S$, does there exist a global representative? Assume there exist a $C^q$-map $H:X \to X$ whose image $H(X)$ is contained in a neighborhood  $U$ of $S$
and which is equal to the identity map in a smaller neighborhood. Such maps we call {\it smooth blid maps at S}. Then every local map $f$ defined in $U$ can be extended on the whole $X$.  It suffices to set $F(x)=f(H(x))$.

In the next example, we construct the map $H$ for a segment (in particular, for a ball).
\begin{example}\label{example-segment}
Let $S(A)$ be a set of all functions $x\in C[0,1]$ whose graphs $(t,x(t))$ are
contained in a closed $A\subset\R^2$, where $A$ is chosen in such a way that $S(A)\neq \emptyset$. Let  $h(t,x)$ be a $C^\infty$-function, which is equals to $1$ in a neighborhood of $A$ and vanishes 
outside of a bigger set. Then, for an arbitrary $y\in C[0,1]$
$$
             H_y(x)(t)=y(t)+h(t,x(t))(x(t)-y(t))
$$
is a $C^{\infty}$-blid map for $S(A)$.

If $A=\left\{\{t,x\}: \min(\psi(t),\phi(t))\leq x\leq  \max(\psi(t),\phi(t))\right\}$ for some $\phi,\psi \in C[0,1]$, then $S(A)$ can be thought of as a segment $[\phi,\psi]\subset C[0,1]$. 

In particular, given $z\in C[0,1]$ and a constant $r>0$, setting $\phi =z-r$ and $\psi=z+r$, we obtain the ball $B_r(z)=\{ x: ||x-z||\leq r \}\subset C[1,0]$.

Every $C^q$-germ at $[\phi,\psi]\subset C[0,1]$ contains a global representative. 

Note, this example has an obvious generalization to segments and balls in $C^k[0,1]$.
\end{example}

The Question \ref{Q-arbitrary-space} and Example \ref{example-segment} bring us to the next question. 
\begin{question} For which pairs $(S,X)$ do similar constructions exist? In particular, can a smooth blid map be constructed for any bounded subset $S$ of a space $X$ possessing a smooth blid map? For example, we do not know whether a smooth blid map can be constructed for a sphere $S=\{x\in C[0,1]: ||x||=r\}$. 
\end{question}
\begin{question} The Borel lemma for finite-dimensional spaces is a particular case of the well-known Whitney extension theorem from a closed set $S\subset \R^n$. What is an infinite-dimensional version of the Whitney theorem? 
\end{question}

In Section~\ref{section-topological} we presented several examples of linear topological spaces with the differentiable blid property.
\begin{question}
Which linear topological spaces have differentiable blid property?
\end{question}

Linearization is a convenient simplification in the study of local dynamics. In some cases partial differential equations can be studied in terms of operators on linear topological spaces. Thus, there arises the question of differentiable linearization.
\begin{question} Is it possible to generalize the Theorem~\ref{thm-diff} for the case of linear topological spaces (e.g., space of $C^{\infty}$ functions), which posses differentiable blid property. 
\end{question}
%%%%%%%%%%%%%%%%%%%%%%%%%%%%
%%%%%%%%%%%%%%%%%%%%%%%%%%%%%%
%%%%%%%%%%%%%%%%%%%%%%%%%%

\

%
% ---- Bibliography ----
%

\end{document}